\newtheorem{theorem}{Theorem}
\newtheorem{lemma}[theorem]{Lemma}
\newtheorem{definition}{Definition}
\newtheorem{corollary}[theorem]{Corollary}
\newtheorem{remark}[theorem]{Remark}
\newcommand{\x}{\mathbf{x}}
\newcommand{\p}{\mathbf{p}}
\newcommand{\q}{\mathbf{q}}
\newcommand{\oo}{\mathbf{o}}
\newcommand{\B}{\mathbf{B}}
\newcommand{\K}{\mathbf{K}}
\newcommand{\Ee}{\mathbb{E}}
\newcommand{\Mm}{\mathbb{M}}
\newcommand{\Rr}{\mathbb{R}}
\newcommand{\Ed}{\Ee^d}
\newcommand{\Rd}{\Rr^d}
\newcommand{\Md}{\Mm^d}
\newcommand{\KK}{{\mathbf K}}
\newcommand{\iprod}[2]{\left<#1,#2\right>}
\newcommand{\noshow}[1]{}
\title{On uniform contractions of balls in Minkowski spaces
\footnote{Keywords and phrases: Kneser--Poulsen conjecture, Gromov--Klee--Wagon conjecture, $r$-ball neighbourhood, $r$-ball molecule, $r$-ball body, $r$-ball polyhedron, (intrinsic) volume, uniform contraction, generating set, Minkowski $d$-space. \newline \hspace*{.35cm} 2010 Mathematics Subject Classification: 52A20, 52A22.}}
\author{K\'{a}roly Bezdek\thanks{Partially supported by a Natural Sciences and 
Engineering Research Council of Canada Discovery Grant.}
}
\date{}
\begin{document}

\maketitle

\begin{abstract}
\noindent Let $N$ balls of the same radius be given in a $d$-dimensional real normed vector space, i.e., in a Minkowski $d$-space. Then apply a uniform contraction to the centers of the $N$ balls without changing the common radius. Here a uniform contraction is a contraction where all the pairwise distances in the first set of centers are larger than all the pairwise distances in the second set of centers. The main results of this paper state that a uniform contraction of the centers does not increase (resp., decrease) the volume of the union (resp., intersection) of $N$ balls in Minkowski $d$-space, provided that $N\geq 2^d$ (resp., $N\geq 3^d$ and the unit ball of the Minkowski $d$-space is a generating set). Some improvements are presented in Euclidean spaces. 
\end{abstract}

\section{Introduction}\label{sec:intro}

The Kneser--Poulsen Conjecture \cite{Kn}, \cite{Po} (resp., Gromov--Klee--Wagon conjecture \cite{Gr87}, \cite{Kl}, \cite{KlWa}) states that if the centers of a family of $N$ unit balls in Euclidean $d$-space is contracted, then the volume of the union (resp., intersection) does not increase (resp., decrease). These conjectures have been proved by Bezdek and Connelly \cite{BeCo} for $d=2$ (in fact, for not necessarily congruent circular disks as well) and they are open for all $d\geq 3$. For a number of partial results in dimensions $d\geq3$, we refer the interested reader to the corresponding chapter in \cite{Be}. Very recently Bezdek and Nasz\'odi \cite{BeNa} investigated the Kneser--Poulsen conjecture as well as the Gromov--Klee--Wagon conjecture for special contractions in particular, for uniform contractions.  Here, a uniform contraction is a contraction where all the pairwise distances in the first set of centers are larger than all the pairwise distances in the second set of centers. The main result of \cite{BeNa} states that a uniform contraction of the centers does not increase (resp., decrease) the volume of the union (resp., intersection) of $N$ unit balls in Euclidean $d$-space ($d\geq 3$), provided that $N\geq c^dd^{2.5d}$, where $c>0$ is a universal constant (resp., $N\geq(1+\sqrt{2})^d$). In this paper we improve these results and extend them to Minkowski spaces.

Let $\K\subset \Rd$ be an $\oo$-symmetric convex body, i.e., a compact convex set with nonempty interior symmetric about the origin $\oo$ in $\Rd$. Let $\| \cdot \|_{\K}$ denote the norm generated by $\K$, which is defined by $\|\x\|_{\KK}:=\min\{\lambda \geq 0\  |\  \lambda\x\in \K\}$ for $\x\in \Rd$. Furthermore, let us denote $\Rd$ with the norm $\| \cdot \|_{\K}$ by $\Md_{\K}$ and call it the {\it Minkowski space of dimension $d$ generated by} $\K$. We write $B_{\K}[\x, r]:=\x+r\K$ for $\x\in\Rd$ and $r>0$ and call any such set a (closed) {\it ball of radius $r$}, where $+$ refers to vector addition extended to subsets of $\Rd$ in the usual way. The following definitions introduce the core notions and notations for our paper.

\begin{definition}\label{r-neighbourhood and r-body}
For $X\subseteq\Rd$ and $r>0$ let
$$X_{r}^{\K}:=\bigcup\{B_{\K}[\x,r]\ |\ \x\in X\}\ ({\rm resp.,}\ X_{\KK}^{r}:=\bigcap\{B_{\K}[\x,r]\ |\ \x\in X\})$$
denote the {\rm $r$-ball neighbourhood} of $X$ (resp., {\rm $r$-ball body} generated by $X$) in $\Md_{\K}$. If $X\subset\Rd$ is a finite set, then we call
$X_{r}^{\K}$ (resp., $X_{\KK}^{r}$) the {\rm $r$-ball molecule} (resp., {\rm $r$-ball polyhedron}) generated by $X$ in $\Md_{\K}$.
\end{definition}

\begin{remark}
We note that $r$-ball bodies and $r$-ball polyhedra have been intensively studied (under various names) from the point of view of convex and discrete geometry in a number of publications (see the recent papers \cite{BLNP}, \cite{JMR}, \cite{KMP}, \cite{LNT}, \cite{MRS}, and the references mentioned there).
\end{remark} 

\begin{definition}\label{uniform contraction}
We say that the (labeled) point set $Q:=\{\q_1, \dots , \q_N\}\subset\Rd$ is a {\rm uniform contraction} of the (labeled) point set $P:=\{\p_1, \dots ,\p_N\}\subset\Rd$ with {\rm separating value} $\lambda>0$ in $\Md_{\K}$  if
$$\|\q_i-\q_j\|_{\K}\leq\lambda\leq\|\p_i-\p_j\|_{\K}\ {\rm holds}\ {\rm for}\  {\rm all}\ 1\leq i<j\leq N.$$
\end{definition}

In order to state the main results of this paper, let $V_d(\cdot)$ denote the Lebesgue measure in $\Rd$ (with $V_d(\emptyset)=0$).

\begin{theorem}\label{union} Let $\K$ be an $\oo$-symmetric convex body in $ \Rd$.
If $r>0, \lambda>0, d>1$, $N\geq 2^d$, and $Q:=\{\q_1, \dots , \q_N\}\subset\Rd$ is a uniform contraction of $P:=\{\p_1, \dots ,\p_N\}\subset\Rd$ with separating value 
$\lambda$ in $\Md_{\K}$, then
\begin{equation}\label{main-1}
V_d(Q_r^{\K})\leq V_d(P_r^{\K}).
\end{equation}
\end{theorem}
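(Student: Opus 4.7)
The plan is to lower-bound $V_d(P_r^{\K})$ using the Brunn--Minkowski inequality applied to a decomposition that exploits the $\lambda$-separation of the $\p_i$'s, and to upper-bound $V_d(Q_r^{\K})$ by a straightforward inclusion that exploits the clustering of the $\q_i$'s.

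First I would dispose of the trivial regime $r\le\lambda/2$. Since $\|\p_i-\p_j\|_{\K}\ge\lambda\ge 2r$, the balls $B_{\K}[\p_i,r]$ have pairwise disjoint interiors, so $V_d(P_r^{\K})=Nr^dV_d(\K)$, while subadditivity yields $V_d(Q_r^{\K})\le Nr^dV_d(\K)$, which establishes (\ref{main-1}) without any hypothesis on $N$. For $r>\lambda/2$, using convexity and $\oo$-symmetry of $\K$ I would write $r\K=(\lambda/2)\K+(r-\lambda/2)\K$, obtaining
\[
P_r^{\K}=S+(r-\lambda/2)\K,\qquad S:=P+(\lambda/2)\K.
\]
The $\lambda$-separation makes $S$ a disjoint union of $N$ translates of $(\lambda/2)\K$, so $V_d(S)=N(\lambda/2)^dV_d(\K)$. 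Brunn--Minkowski then gives
\[
V_d(P_r^{\K})^{1/d}\,\ge\,\bigl(N^{1/d}\,(\lambda/2)+r-\lambda/2\bigr)\,V_d(\K)^{1/d},
\]
which under $N\ge 2^d$ simplifies to $V_d(P_r^{\K})\ge(r+\lambda/2)^dV_d(\K)$. For the upper side, the clustering hypothesis $\|\q_i-\q_j\|_{\K}\le\lambda$ yields $Q\subseteq \q_1+\lambda\K$, hence $V_d(Q_r^{\K})\le(r+\lambda)^dV_d(\K)$, complemented by the trivial $V_d(Q_r^{\K})\le Nr^dV_d(\K)$.

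The main obstacle is the sharpness of the comparison. A head-to-head clash of the bounds $V_d(P_r^{\K})\ge(r+\lambda/2)^dV_d(\K)$ and $V_d(Q_r^{\K})\le(r+\lambda)^dV_d(\K)$ only forces $(N^{1/d}-1)\lambda/2\ge\lambda$, i.e., the weaker threshold $N\ge 3^d$. That $N=2^d$ is the right threshold is witnessed by the cube $\K=[-1,1]^d$ with $P=Q=\{0,\lambda\}^d$: there $S$ tiles a larger cube (making Brunn--Minkowski an equality), $Q$ fits in a translate of $(\lambda/2)\K$, and both sides of (\ref{main-1}) equal $(r+\lambda/2)^dV_d(\K)$ simultaneously. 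To close the gap in general I would refine the upper bound on $V_d(Q_r^{\K})$, splitting $r>\lambda/2$ into the range $r\in(\lambda/2,\lambda]$ (where the subadditivity bound $Nr^dV_d(\K)$ is the effective one) and the range $r>\lambda$ (where the containment bound is effective), and interpolating between them using the equality case of Brunn--Minkowski to reduce the extremal configurations for $Q$ to ones parallel to the extremal configurations for $P$; combining this refined upper bound with the Brunn--Minkowski lower bound above would then complete the proof.
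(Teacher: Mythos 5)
Your lower bound on $V_d(P_r^{\K})$ is exactly the paper's argument: decompose $P_r^{\K}=\left(P+\frac{\lambda}{2}\K\right)+\left(r-\frac{\lambda}{2}\right)\K$, use that the first summand is a packing of total volume $N\left(\frac{\lambda}{2}\right)^dV_d(\K)$, and apply Brunn--Minkowski to get $V_d(P_r^{\K})\geq\left(r+(N^{1/d}-1)\frac{\lambda}{2}\right)^dV_d(\K)\geq\left(r+\frac{\lambda}{2}\right)^dV_d(\K)$ for $N\geq 2^d$. Your handling of the trivial regime $r\leq\frac{\lambda}{2}$ is also fine.

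The gap is in the upper bound on $V_d(Q_r^{\K})$. As you yourself note, the two bounds you actually establish, $V_d(Q_r^{\K})\leq(r+\lambda)^dV_d(\K)$ (from $Q\subseteq\q_1+\lambda\K$) and $V_d(Q_r^{\K})\leq Nr^dV_d(\K)$, only deliver the weaker threshold $N\geq 3^d$, so the proof is not complete at $N\geq 2^d$. What is needed is $V_d(Q_r^{\K})\leq\left(r+\frac{\lambda}{2}\right)^dV_d(\K)$, and your proposed fix --- ``interpolating'' between the two regimes and invoking the equality case of Brunn--Minkowski to ``reduce to extremal configurations'' --- is not an argument: no mechanism is given for such a reduction, and equality-case analysis does not by itself control non-extremal $Q$. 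The missing ingredient is the isodiametric inequality in Minkowski spaces (Theorem 11.2.1 in Burago--Zalgaller), which is what the paper uses: since ${\rm diam}_{\K}(Q_r^{\K})={\rm diam}_{\K}(Q)+2r\leq\lambda+2r$, any set of $\K$-diameter at most $2r+\lambda$ has volume at most $\left(r+\frac{\lambda}{2}\right)^dV_d(\K)$. (This in turn follows from Brunn--Minkowski via central symmetrization: $V_d(X)^{1/d}\leq V_d\left(\tfrac{1}{2}(X+(-X))\right)^{1/d}$ and $\tfrac{1}{2}(X-X)\subseteq\frac{{\rm diam}_{\K}(X)}{2}\K$.) So the tools you list suffice in principle, but the decisive step --- passing from the diameter bound on $Q_r^{\K}$ to a volume bound with the factor $\frac{\lambda}{2}$ rather than $\lambda$ --- is absent from your proposal.
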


\begin{remark}\label{union-conv}
The proof of Theorem~\ref{union} presented below yields the following statement as well. If $r\geq\frac{\lambda}{2}>0$, $d>1$, $N\geq 2^d$, and $Q$ is a uniform contraction of $P$ with separating value $\lambda$ in $\Md_{\K}$, then 
\begin{equation}\label{main-11}
V_d\left({\rm conv}(Q_r^{\K})\right)\leq V_d\left({\rm conv}(P_r^{\K})\right),
\end{equation}
where ${\rm conv}(\cdot )$ stands for the convex hull of the given set in $\Rd$.
\end{remark}


Recall from \cite{Sc} that the compact convex set $\emptyset\neq{A'}\subset\Rd$ is a {\it summand} of the compact convex set  $\emptyset\neq{A}\subset\Rd$ if there exists a compact convex set  $\emptyset\neq{A''}\subset\Rd$ such that ${A'}+{A''}={A}$. Furthermore, following \cite{MoSc} we say that the convex body $\B\subset\Rd$ is a {\it generating set} if any nonempty intersection of translates of $\B$ is a summand of $\B$. In particular, we say that {\it $\Md_{\K}$ possesses a generating unit ball} if $B_{\K}[\oo, 1]=\K$ is a generating set in $\Rd$. For a recent overview on generating sets see the relevant subsections in \cite{MS} and \cite{MoSc}. Here we recall the following statements only. Two-dimensional convex bodies are generating sets. Euclidean balls are generating sets as well and the system of generating sets is stable under non-degenerate linear maps and under direct sums. Furthermore, a centrally symmetric convex polytope is a generating set if and only if it is a direct sum of convex polygons and in odd dimension, a line segment.

\begin{theorem}\label{intersection}
Let $r>0, \lambda>0, d>1$, $N\geq 3^d$, and let the $\oo$-symmetric convex body $\K$ be a generating set in $\Rd$. If $Q:=\{\q_1, \dots , \q_N\}\subset\Rd$ is a uniform contraction of $P:=\{\p_1, \dots ,\p_N\}\subset\Rd$ with separating value $\lambda$ in $\Md_{\K}$, then
\begin{equation}\label{main-2}
V_d(P_{\K}^{r})\leq V_d(Q_{\K}^{r}).
\end{equation}
\end{theorem}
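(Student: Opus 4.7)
The plan is to sandwich the two volumes around the benchmark $(r-\lambda)^d V_d(\K)$, using the hypothesis $N\ge 3^d$ to force $r\ge\lambda$ and the generating assumption on $\K$ to bound $V_d(P_\K^r)$ from above.

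If $P_\K^r=\emptyset$ then (\ref{main-2}) is immediate, so fix $\y\in P_\K^r$. Then $P\subseteq B_\K[\y,r]$, and the standard volume-packing estimate for $\lambda$-separated points (the bodies $B_\K[\p_i,\lambda/2]$ have pairwise disjoint interiors and lie in $B_\K[\y,r+\lambda/2]$) yields $N(\lambda/2)^d\le(r+\lambda/2)^d$, which forces $r\ge\lambda$ since $N\ge 3^d$. Moreover, the $\K$-diameter of $Q$ is at most $\lambda\le r$, so $Q\subseteq B_\K[\q_1,\lambda]$, and the triangle inequality gives $B_\K[\q_1,r-\lambda]\subseteq Q_\K^r$, whence
\begin{equation*}
V_d(Q_\K^r)\;\ge\;(r-\lambda)^d V_d(\K).
\end{equation*}

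The matching upper bound $V_d(P_\K^r)\le(r-\lambda)^d V_d(\K)$ is the main obstacle and is where the generating hypothesis becomes essential. Since $\K$ is generating, the intersection $P_\K^r=\bigcap_i(\p_i+r\K)$ is a summand of $r\K$, so there exist a convex body $C_P$ and a translate $\z_P$ with $P_\K^r+C_P=r\K+\z_P$. One natural route is to combine this summand identity with the Brunn--Minkowski inequality, reducing the desired bound to $V_d(C_P)\ge\lambda^d V_d(\K)$; the structural inclusion $C_P\supseteq-\conv(P)$ (up to translate), which follows from the tautological upper bound $h_{P_\K^r}\le h_{r\K}-h_{-P}$ together with the generating identity, handles this whenever $\conv(P)$ contains a translate of $\lambda\K$, and the packing consequence of $N\ge 3^d$ is what forces such a copy to sit inside $\conv(P)$ in most configurations. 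The delicate point is the degenerate low-dimensional-spread regime, where $\conv(P)$ is too thin to accommodate $\lambda\K$; here one must argue directly that $P_\K^r$ itself becomes a lens-type set that collapses toward the complementary affine subspace, and that its volume therefore falls below $(r-\lambda)^d V_d(\K)$. Pinning down the exact threshold $N\ge 3^d$ as the one at which these two regimes together cover every configuration is, in my view, the technical core of the proof. Once both inequalities are in hand, chaining them produces (\ref{main-2}).
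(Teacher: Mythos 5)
Your overall architecture is the paper's: establish $r\ge\lambda$ from the packing consequence of $N\ge 3^d$, bound $V_d(Q_{\K}^{r})$ from below by an inball argument, and bound $V_d(P_{\K}^{r})$ from above via the summand property of generating sets plus Brunn--Minkowski. The first two steps are fine (the paper even gets a slightly better lower bound $\left(r-\frac{d}{d+1}\lambda\right)^dV_d(\K)$ via Bohnenblust's theorem, but your cruder $\left(r-\lambda\right)^dV_d(\K)$ suffices for the sandwich). The problem is that the third step --- the upper bound $V_d(P_{\K}^{r})\le(r-\lambda)^dV_d(\K)$, which you yourself identify as the main obstacle --- is not actually proved. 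Your route requires the complementary summand $C_P$ to have volume at least $\lambda^dV_d(\K)$, and you try to get this from $C_P\supseteq-{\rm conv}(P)$ together with ${\rm conv}(P)$ containing a translate of $\lambda\K$. That containment genuinely fails: $N\ge 3^d$ points that are pairwise $\lambda$-separated can all lie in a hyperplane, in which case $V_d({\rm conv}(P))=0$ and no translate of $\lambda\K$ fits. Your proposed repair for this ``degenerate regime'' (that $P_{\K}^{r}$ becomes a lens whose volume drops below $(r-\lambda)^dV_d(\K)$) is an assertion, not an argument, and it is not clear how to make it one; so the proof has a genuine gap exactly at its technical core.

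The idea you are missing is an inflation trick that removes the case analysis entirely. Write $P_{\K}^{r}=\left(P_{\frac{\lambda}{2}}^{\K}\right)_{\K}^{r+\frac{\lambda}{2}}$, i.e.\ replace the point set $P$ by the union of balls $P_{\frac{\lambda}{2}}^{\K}$ and the radius $r$ by $r+\frac{\lambda}{2}$. Since $\K$ is generating, $\left(P_{\frac{\lambda}{2}}^{\K}\right)_{\K}^{r+\frac{\lambda}{2}}$ is a summand of $\left(r+\frac{\lambda}{2}\right)\K$, and the complementary summand is (up to reflection) the $\left(r+\frac{\lambda}{2}\right)$-ball convex hull of $P_{\frac{\lambda}{2}}^{\K}$, which contains $P_{\frac{\lambda}{2}}^{\K}$ itself. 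Because the balls $B_{\K}\left[\p_i,\frac{\lambda}{2}\right]$ form a packing, this set has volume at least $N\left(\frac{\lambda}{2}\right)^dV_d(\K)$ \emph{regardless of the geometry of $P$} --- no full-dimensionality of ${\rm conv}(P)$ is needed. Brunn--Minkowski then gives
\begin{equation*}
V_d(P_{\K}^{r})^{\frac{1}{d}}\le\left(r+\tfrac{\lambda}{2}\right)V_d(\K)^{\frac{1}{d}}-N^{\frac{1}{d}}\tfrac{\lambda}{2}\,V_d(\K)^{\frac{1}{d}}=\left(r-(N^{\frac{1}{d}}-1)\tfrac{\lambda}{2}\right)V_d(\K)^{\frac{1}{d}}\le(r-\lambda)V_d(\K)^{\frac{1}{d}},
\end{equation*}
using $N^{\frac{1}{d}}\ge 3$. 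This closes your sandwich and is exactly how the paper finishes.
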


\begin{remark}\label{intersection-remark}
We say that {\rm the balls of $\Md_{\K}$ are volumetric maximizers for $r$-ball bodies in $\Md_{\K}$} if for any compact set $\emptyset\neq A\subset\Rd$ with $V_d(A)>0$ the inequality
\begin{equation}\label{Blaschke-Santalo}
V_d(A_{\K}^{r})\leq V_d(B_{\K}[\oo, r-r_{\K}(A)])
\end{equation}
holds for all $r>r_{\K}(A)$, where $V_d(A)=V_d(B_{\K}[\oo, r_{\K}(A)])$. On the one hand, if the balls of $\Md_{\K}$ are generating sets in $\Rd$, then they are volumetric maximizers for $r$-ball bodies in $\Md_{\K}$. On the other hand, if the balls of $\Md_{\K}$ are volumetric maximizers for $r$-ball bodies in $\Md_{\K}$, then (\ref{main-2}) holds
whenever $Q:=\{\q_1, \dots , \q_N\}\subset\Rd$ is a uniform contraction of $P:=\{\p_1, \dots ,\p_N\}\subset\Rd$ with separating value $\lambda$ in $\Md_{\K}$ and $r>0, \lambda>0, d>1$, $N\geq 3^d$. Thus, it would be interesting to find a proper characterization of those Minkowski spaces $\Md_{\K}$ whose balls are volumetric maximizers for $r$-ball bodies, that is, for which
(\ref{Blaschke-Santalo}) holds.
\end{remark}

\noindent We simplify our notations when $\K$ is a Euclidean ball of $\Rd$ as follows. We denote the Euclidean norm of a vector $\p$ in the $d$-dimensional Euclidean space $\Ee^d$ by $|\p|:=\sqrt{\iprod{\p}{\p}}$, where $\iprod{\cdot}{\cdot}$ is the standard inner product. The closed Euclidean ball of radius $r$ centered at the point $\p\in\Ed$ is denoted by $\B^d[\p,r]:=\{\q\in\Ed\ |\  |\p-\q|\leq r\}$. For a set $ X\subseteq\Ee^d$, $d>1$ and $r>0$, let $X_r:=\bigcup_{\x\in X}\B^d[\x, r]$ (resp., $X^r:=\bigcap_{\x\in X}\B^d[\x, r]$). Let $\emptyset\neq A\subset\Ed$ be a compact convex set, and $0\leq k\leq d$. We denote the {\it $k$-th quermassintegral} of $A$ by $W_k(A)$. It is well known that $W_0(A)=V_d(A)$. Moreover, $dW_1(A)$ is the surface area of $A$, $\frac{2}{\omega_d}W_{d-1}(A)$ is equal to the mean width of $A$, and $W_d(A)=\omega_d$, where $\omega_d$ stands for the volume of a $d$-dimensional unit ball, that is, $\omega_d=\frac{\pi^{\frac{d}{2}}}{\Gamma(1+\frac{d}{2})}$ (\cite{Sc}, p. 290-291). In this paper, for simplicity $W_k({\emptyset})=0$ for all $0\leq k\leq d$. Here we recall Kubota's integral recursion formula (\cite{Sc}, p. 295), according to which 
\begin{equation}\label{Kubota}
W_k(A)=\frac{1}{d\omega_{d-1}}\int_{S^{d-1}} W_{k-1}\left(P_{{\bf u}^\perp}(A)\right)d\lambda({\bf u})
\end{equation}
holds for any compact convex set $\emptyset\neq A\subset\Ed$ and for any $0<k<d$, where $S^{d-1}:={\rm bd}(\B^d[\oo, 1])=\{\x \in\Ee^d\ |\ |\x |=1\}$, $d\lambda(\cdot)$ is the spherical Lebesgue measure on $S^{d-1}$, and $P_{{\bf u}^\perp}(\cdot)$ is the orthogonal projection onto the orthogonal complement of the $1$-dimensional linear subspace spanned by ${\bf u}\in S^{d-1}$. Finally, we recall that Ohmann \cite{Oh52}, \cite{Oh54}, \cite{Oh56} using Kubota's formula (\ref{Kubota}) has inductively defined the quermassintegrals $W_k(A)$, $0<k<d$ for any compact set $\emptyset\neq A\subset\Ed$ with $W_0(A):=V_d(A)$ and $W_d(A):=\omega_d$ and proved analogues of some classical inequalities on quermassintegrals. In what follows we use Ohmann's extension of the classical quermassintegrals for non-convex compact sets.

We note that if $\K$ is a Euclidean ball in $\Rd$, then Theorem~\ref{union} improves Theorem 1.5 of \cite{BeNa} by replacing the condition $N\geq c^dd^{2.5d}$ with the weaker condition $N\geq 2^d$. On the other hand, Theorem 1.4 of \cite{BeNa} improves Theorem~\ref{intersection} for $\Md_{\K}=\Ee^d$ as follows:  if $\lambda>0$, $r>0$, $d>1$, $0\leq k<d$, $ N\geq \left(1+\sqrt{2}\right)^d=(2.414\dots)^d$, and $Q:=\{\q_1,\dots ,\q_N\}\subset\Ee^d$ is a uniform contraction of $P:=\{\p_1,\dots ,\p_N\}\subset\Ee^d$ with separating value $\lambda$ in $\Ee^d$, then $W_k(P^r)\leq W_k(Q^r)$ (see also \cite{Be19}). In this paper, we improve the later result for $k=0$ in large dimensions moreover, extend Theorem~\ref{union} and Remark~\ref{union-conv} to intrinsic volumes when $\K$ is a Euclidean ball in $\Rd$. 

\begin{theorem}\label{Euclidean}
\item{\bf (i)} If $r\geq\frac{\lambda}{2}>0$, $0\leq k<d$, $N\geq 2^d$, and $Q:=\{\q_1, \dots , \q_N\}\subset\Ee^d$ is a uniform contraction of $P:=\{\p_1, \dots ,\p_N\}\subset\Ee^d$ with separating value $\lambda$ in $\Ee^d$, then 
\begin{equation}\label{main-22-conv-Euclidean}
W_k\left({\rm conv}(Q_r)\right)\leq W_k\left({\rm conv}(P_r)\right).
\end{equation}
and
\begin{equation}\label{main-22-Euclidean}
W_k\left(Q_r\right)\leq W_k\left(P_r\right).
\end{equation}
\item{\bf (ii)} If $r>0, \lambda>0$, $d\geq d_0$ (with a (large) universal constant $d_0$), $N\geq {2.359}^d$, and $Q:=\{\q_1, \dots , \q_N\}\subset\Ee^d$ is a uniform contraction of $P:=\{\p_1, \dots ,\p_N\}\subset\Ee^d$ with separating value $\lambda$ in $\Ee^d$, then
\begin{equation}\label{main-2-Euclidean}
V_d(P^{r})\leq V_d(Q^{r}).
\end{equation}
\end{theorem}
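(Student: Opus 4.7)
The plan has two prongs, one for each part of the theorem.

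For part~(i), the idea is to bootstrap Theorem~\ref{union} and Remark~\ref{union-conv} (which handle the base case $W_0=V_d$) to all higher quermassintegrals via Kubota's recursion (\ref{Kubota}). Iterating Kubota expresses $W_k(A)=c_{d,k}\int_{G(d,d-k)}V_{d-k}(P_L(A))\,d\nu(L)$, where $\nu$ is the normalized Haar measure on the Grassmannian of $(d-k)$-dimensional linear subspaces $L\subset\Ee^d$. Since orthogonal projection commutes with Minkowski addition by the unit ball and with taking convex hulls, one has $P_L(Q_r)=(P_L Q)_r$ in $L$ and $P_L(\conv(Q_r))=\conv(P_L Q)+r\B^{d-k}[\oo,1]$, and likewise for $P$, reducing the problem to $V_{d-k}$-inequalities in $\Ee^{d-k}$. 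The base estimate $V_d(Q_r)\le V_d(P_r)$ in the full dimension combines a Brunn--Minkowski lower bound $V_d(P_r)\ge\omega_d[(N^{1/d}-1)\lambda/2+r]^d$ from the disjointness of the $(\lambda/2)$-balls around the $\p_i$'s, with the Jung-type upper bound $V_d(Q_r)\le\omega_d(r+\lambda\sqrt{d/(2(d+1))})^d$ coming from the inclusion of $Q$ in a ball of radius $\lambda\sqrt{d/(2(d+1))}$.

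The main obstacle is that orthogonal projection shrinks distances: while $|P_L\q_i-P_L\q_j|\le\lambda$ survives, the lower bound $|P_L\p_i-P_L\p_j|\ge\lambda$ need not, so $(P_L Q,P_L P)$ is not a uniform contraction in $\Ee^{d-k}$. I would therefore avoid iterating Kubota on the $P$ side and instead use the Steiner polynomial identity $W_k(\conv(Q_r))=\sum_{j=0}^{d-k}\binom{d-k}{j}r^j W_{k+j}(\conv(Q))$ (and its analogue for $P$), reducing (\ref{main-22-conv-Euclidean}) to the term-by-term comparisons $W_i(\conv(Q))\le W_i(\conv(P))$ for $k\le i\le d$; the top term ($i=d$) is trivial as $W_d=\omega_d$ in both cases, and the others follow by the same Brunn--Minkowski/Jung dichotomy as above. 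The non-convex version (\ref{main-22-Euclidean}) is then deduced from $W_k(Q_r)\le W_k(\conv(Q_r))$ (monotonicity of $W_k$ under convex hull) combined with the lower bound $W_k(P_r)\ge W_k(\bigsqcup_i\B^d[\p_i,\lambda/2])$, valid since $r\ge\lambda/2$ makes the disjoint union a subset of $P_r$.

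For part~(ii), the plan is to refine the argument of \cite{BeNa} (Theorem~1.4) via sharper quantitative bounds on both sides. I would upper bound $V_d(P^r)$ by locating the circumcenter $c_P$ of $P$ with circumradius $R(P)$: the points of $P$ lying on the circumsphere $S(c_P,R(P))$ contain $c_P$ in their convex hull, so a direct computation yields $P^r\subseteq\B^d[c_P,\sqrt{r^2-R(P)^2}]$ and hence $V_d(P^r)\le\omega_d(r^2-R(P)^2)^{d/2}$; the sphere-packing bound $R(P)\ge(N^{1/d}-1)\lambda/2$ then injects $N$. For the complementary lower bound, Jung's theorem gives $R(Q)\le\lambda\sqrt{d/(2(d+1))}$, whence $\B^d[c_Q,r-R(Q)]\subseteq Q^r$ and $V_d(Q^r)\ge\omega_d(r-R(Q))^d$. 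The bare combination yields exactly the Bezdek--Nasz\'odi condition $N^{1/d}\ge 1+\sqrt{2}\approx 2.414$; the improvement to $2.359$ in large dimensions should come from replacing the Pythagorean circumball bound on $V_d(P^r)$ with a tighter Urysohn/mean-width estimate $V_d(P^r)\le\omega_d(r-b(\conv(P))/2)^d$, where $b(\cdot)$ denotes mean width, which in high dimension exploits that $b(\conv(P))$ grows with $N$ faster than $2R(Q)\le\sqrt{2}\,\lambda$.

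The main obstacle in part~(ii) is controlling $b(\conv(P))$ from below using only the packing data $N\ge c^d$: the configuration $P$ can be near-degenerate (nearly contained in a proper affine subspace), for which the mean width collapses and Urysohn's bound degenerates. This I would handle either by a case analysis, or by reducing $V_d(P^r)$ for a low-dimensional $P$ via a cylindrical Fubini decomposition of $P^r$ along the orthogonal complement of the affine hull of $P$, so as to iterate the theorem in a strictly lower dimension where the constant $2.359$ still applies by the hypothesis $N\ge 2.359^d\ge 2.359^{d-1}$.
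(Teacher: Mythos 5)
Both parts of your plan have genuine gaps. In part (i), the reduction of (\ref{main-22-conv-Euclidean}) via the Steiner polynomial to the term-by-term comparisons $W_i({\rm conv}(Q))\le W_i({\rm conv}(P))$ for $k\le i\le d$ fails because those comparisons are false in general: take $P$ to be $N=2^d$ collinear points spaced $\lambda$ apart and $Q$ a set of diameter $\lambda$ with nonempty interior; then $W_0({\rm conv}(P))=0<W_0({\rm conv}(Q))$, even though the theorem's conclusion still holds. The separation hypothesis gives no lower bound on the individual $W_i({\rm conv}(P))$ --- only the $r$-neighbourhood $P_r$ inherits a volume lower bound from the packing $\{\B^d[\p_i,\lambda/2]\}$. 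Moreover, your upper bound on the $Q$-side via Jung (a circumball of radius $\lambda\sqrt{d/(2(d+1))}\approx 0.707\lambda$) is too weak to beat $r+(N^{1/d}-1)\lambda/2\ge r+\lambda/2$ when $N=2^d$; the paper instead uses the extremal property that balls maximize $W_k$ among convex bodies of given diameter, giving $W_k({\rm conv}(Q_r))\le(r+\lambda/2)^{d-k}\omega_d$, against the lower bound obtained from the dual extremal property that balls minimize $W_k$ among convex bodies of given volume, applied to ${\rm conv}(P_r)$ with the volume estimate (\ref{union-44}). For the non-convex inequality (\ref{main-22-Euclidean}), your lower bound $W_k(P_r)\ge W_k\bigl(\bigcup_i\B^d[\p_i,\lambda/2]\bigr)$ does not grow with $r$ (and for $k\ge1$ Ohmann's $W_k$ is not additive over disjoint balls, since their projections overlap), so it cannot dominate a $Q$-side bound of order $(r+\lambda/2)^{d-k}$ for large $r$; the paper uses Ohmann's two inequalities ${\omega_d}^{d-1-k}W_k(A)\le W_{d-1}(A)^{d-k}$ and $W_k(A)\ge W_k(\B^d[\oo,r(A)])$ for compact $A$, which transfer the diameter bound on $Q_r$ and the volume bound on $P_r$ to all quermassintegrals.

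In part (ii), your starting upper bound $V_d(P^r)\le\omega_d\left(r^2-R(P)^2\right)^{d/2}$ does not ``yield exactly'' the $(1+\sqrt2)^d$ condition: for fixed $P$ and $Q$ it behaves like $\omega_d r^d$ as $r\to\infty$ and is then weaker than the inball lower bound $\omega_d(r-R(Q))^d$ for $Q^r$, so the comparison fails for large $r$. The bound actually needed (from \cite{BeNa} and used here) is the linear-in-$r$ bound $V_d(P^r)\le\omega_d\left(r-(N^{1/d}-1)\lambda/2\right)^d$, which comes from Brunn--Minkowski applied to the summand identity $X_{\K}^r-{\rm conv}_{r,\K}(X)=\B^d[\oo,r]$ (Euclidean balls being generating sets). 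More importantly, the mechanism producing $2.359$ is absent from your plan: it consists of (a) Schramm's inequality $V_d(K^\mu)\ge\left(\sqrt{\mu^2-\rho^2+x^2}-x\right)^d\omega_d$, which sharpens the inball bound for $Q^r$ to $\omega_d\left(\sqrt{r^2-\tfrac{d-1}{d+1}(\lambda/2)^2}-\lambda/2\right)^d$, together with (b) the Kabatiansky--Levenshtein packing density bound $\delta_d<2^{-0.599d}$, which forces ${\rm cr}(P)>0.7865\lambda$ and hence $2r/\lambda>1.573$ whenever $P^r\ne\emptyset$; the statement then reduces to the elementary inequality $x-\sqrt{x^2-1}+2\le 2.359$ for $x>1.573$. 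Your alternative route via Urysohn's inequality is left with the acknowledged, unresolved degeneracy obstacle, so the proposal does not constitute a proof of either part.
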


\begin{remark}
It has been proved in \cite{Al}, \cite{CaPa}, and \cite{Su} that the mean width of the convex hull of a finite subset of $\Ee^d$, $d>1$ is not less than the mean width of the convex hull of any of its contractions in $\Ee^d$. From this it follows in a straightforward way that if $r>0$, $d>1$, $N>1$, and $Q:=\{\q_1, \dots , \q_N\}$ is a contraction of $P:=\{\p_1, \dots ,\p_N\}$ in $\Ee^d$, then 
$W_{d-1}\left({\rm conv}(Q_r)\right)\leq W_{d-1}\left({\rm conv}(P_r)\right)$. Thus, it is natural to ask whether also $W_{d-1}(Q_r)\leq W_{d-1}(P_r)$ holds whenever $Q:=\{\q_1, \dots , \q_N\}$ is a contraction of $P:=\{\p_1, \dots ,\p_N\}$ in $\Ee^d$ with $r>0$, $d>1$, and $N>1$. This question for $d=2$ can be regarded as a (somewhat unusual) relative of Alexander's longstanding conjecture \cite{Al} (see also \cite{BeCoCs}), which states that if $Q:=\{\q_1, \dots , \q_N\}$ is a contraction of $P:=\{\p_1, \dots ,\p_N\}$ in $\Ee^2$, then $W_1(P^r)\leq W_1(Q^r)$ holds for $r>0$ and $N>1$.
\end{remark}


In the rest of the paper we prove the theorems stated.

\section{Proof of Theorem~\ref{union}}

As (\ref{main-1}) holds trivially for $0<r\leq \frac{\lambda}{2}$ therefore we may assume that $0<\frac{\lambda}{2}<r$. Recall that for a bounded set  $\emptyset\neq X\subset\Rd$
the diameter ${\rm diam}_{\K}(X)$ of $X$ in $\Md_{\K}$ is defined by ${\rm diam}_{\K}(X):=\sup\{\|\x_1-\x_2\|_{\K} \ | \ \x_1,\x_2\in X\}$. Clearly,
\begin{equation}\label{union-1}
{\rm diam}_{\K}(Q_r^{\K})={\rm diam}_{\K}(Q)+2r\leq \lambda+2r .
\end{equation}
Thus, the isodiametric inequality in Minkowski spaces (Theorem 11.2.1 in \cite{BuZa}) and (\ref{union-1}) imply that
\begin{equation}\label{union-2}
V_d(Q_r^{\K})\leq \left(r+\frac{\lambda}{2}\right)^dV_d(\K) .
\end{equation}

\noindent For the next estimate recall that the {\it volumetric radius relative to $\K$} of the compact set $\emptyset\neq A\subset\Rd$ is denoted by $r_{\K}(A)$ and it is defined by $V_d\left(r_{\K}(A)\K\right)=\left(r_{\K}(A)\right)^dV_d(\K):=V_d(A)$. Using this concept one can derive the following inequality from the Brunn--Minkowski inequality in a rather straightforward way (Theorem 9.1.1 in \cite{BuZa}):
\begin{equation}\label{union-3}
r_{\K}(A_{\epsilon}^{\K})\geq r_{\K}(A)+\epsilon ,
\end{equation}
which holds for any $\epsilon>0$. As $\{B_{\K}\left[\p_i, \frac{\lambda}{2}\right]\ |\ 1\leq i\leq N\}$ is a packing in $\Rd$ therefore $r_{\K}\left(P_{\frac{\lambda}{2}}^{\K}\right)=N^{\frac{1}{d}}\frac{\lambda}{2}$. Combining this observation with (\ref{union-3}) yields

\begin{equation}\label{union-4}
V_d(P_r^{\K})=V_d\left( \left(P_{\frac{\lambda}{2}}^{\K}\right)_{r-\frac{\lambda}{2}}^{\K}\right)\geq\left(N^{\frac{1}{d}}\frac{\lambda}{2}+(r-\frac{\lambda}{2}) \right)^dV_d(\K)=\left(r+(N^{\frac{1}{d}}-1)\frac{\lambda}{2} \right)^dV_d(\K) .
\end{equation}
Finally, as $N\geq 2^d$ therefore $N^{\frac{1}{d}}-1\geq 1$ and Theorem~\ref{union} follows from (\ref{union-2}) and (\ref{union-4}) in a straightforward way.

\section{Proof of Remark~\ref{union-conv}}

By assumption $0<\frac{\lambda}{2}\leq r$. Furthermore, we clearly have ${\rm diam}_{\K}\left({\rm conv}(Q_r^{\K})\right)={\rm diam}_{\K}\left(\left({\rm conv}(Q)\right)_r^{\K}\right)= {\rm diam}_{\K}\left({\rm conv}(Q)\right)+2r={\rm diam}_{\K}(Q)+2r\leq \lambda+2r$. Thus, the isodiametric inequality (Theorem 11.2.1 in \cite{BuZa}) applied to ${\rm conv}(Q_r^{\K})$ yields
\begin{equation}\label{union-22}
V_d\left({\rm conv}(Q_r^{\K})\right)\leq \left(r+\frac{\lambda}{2}\right)^dV_d(\K) .
\end{equation}
On the other hand, as ${\rm conv}(P_r^{\K})\supseteq P_r^{\K}$ therefore (\ref{union-4}) yields
\begin{equation}\label{union-44}
V_d\left({\rm conv}(P_r^{\K})\right)\geq\left(r+(N^{\frac{1}{d}}-1)\frac{\lambda}{2} \right)^dV_d(\K) .
\end{equation}    
Finally, $N\geq 2^d$, (\ref{union-22}) and (\ref{union-44}) complete the proof of Remark~\ref{union-conv}.

\section{Proof of Theorem~\ref{intersection}}

The following proof extends the core ideas of the proof of Theorem 1.4 from \cite{BeNa} to Minkowski spaces. For a bounded set $\emptyset\neq X\subset \Rd$ let ${\rm cr}_{\K}(X):=\inf \{R>0 \ |\ X\subseteq B_{\K}[\x, R]\ {\rm with}\ \x\in\Rd\}$. We call ${\rm cr}_{\K}(X)$ the {\it circumradius} of $X$ in $\Md_{\K}$. Now, recall that $P=\{\p_1,\dots ,\p_N\}\subset\Rd$ with $N\geq 3^d$ such that $\lambda\leq \|\p_i-\p_j\|_{\K}$ holds for all $1\leq i<j\leq N$. We claim that
\begin{equation}\label{intersection-1}
\lambda\leq {\rm cr}_{\K}(P) .
\end{equation}
For a proof assume that ${\rm cr}_{\K}(P)<\lambda$. Then there exists $\x_0\in\Rd$ such that
\begin{equation}\label{indirect-1}
P_{\frac{\lambda}{2}}^{\K}\subset B_{\K}\left[\x_0, \frac{3}{2}\lambda\right] .
\end{equation}
As $\{B_{\K}\left[\p_i, \frac{\lambda}{2}\right]\ |\ 1\leq i\leq N\}$ is a packing in $\Rd$ therefore
\begin{equation}\label{indirect-2}
V_d\left(P_{\frac{\lambda}{2}}^{\K}\right)=N\left(\frac{\lambda}{2}\right)^dV_d(\K) .
\end{equation}
Finally, (\ref{indirect-1}) and (\ref{indirect-2}) imply that $N\left(\frac{\lambda}{2}\right)^dV_d(\K)<\left(\frac{3}{2}\lambda\right)^dV_d(\K)$ and therefore $N<3^d$, a contradiction.
This completes the proof of (\ref{intersection-1}).

If $r\leq {\rm cr}_{\K}(P)$, then clearly $V_d(P_{\K}^r)=V_d(\emptyset)=0\leq V_d(Q_{\K}^r)$, finishing the proof of Theorem~\ref{intersection} in this case.

Hence, for the rest of the proof of Theorem~\ref{intersection} we may assume via (\ref{intersection-1}) that
\begin{equation}\label{intersection-2}
0<\lambda\leq {\rm cr}_{\K}(P)<r .
\end{equation}
Next, recall that $Q=\{\q_1,\dots ,\q_N\}\subset\Rd$ with $N\geq 3^d$ such that $\|\q_i-\q_j\|_{\K}\leq \lambda$ holds for all $1\leq i<j\leq N$. Thus, Bohnenblust's theorem (Theorem 11.1.3 in \cite{BuZa}) yields ${\rm cr}_{\K}(Q)\leq \frac{d}{d+1}{\rm diam}_{\K}(Q)\leq\frac{d}{d+1}\lambda$, from which it is easy to derive that
\begin{equation}\label{intersection-3}
V_d(Q_{\K}^r)\geq \left(r-\frac{d}{d+1}\lambda\right)^dV_d(\K) .
\end{equation}
Here (\ref{intersection-2}) guarantees that $r-\frac{d}{d+1}\lambda>r-\lambda> 0$.

For a bounded set $\emptyset\neq X\subset\Rd$ and $r>0$ with ${\rm cr}_{\K}(X)\leq r$ let ${\rm conv}_{r, \K}(X):=\bigcap\{B_{\K}[\x, r]\ |\ X\subseteq B_{\K}[\x , r]\ {\rm with}\ \x\in\Rd\}$.
We call ${\rm conv}_{r, \K}(X)$ the {\it $r$-ball convex hull} of $X$ in $\Md_{\K}$. If $\emptyset\neq X\subset\Rd$ is a bounded set and $r>0$ with ${\rm cr}_{\K}(X)> r$, then let ${\rm conv}_{r, \K}(X):=\Rd$. Moreover, for an unbounded set $X\subseteq\Rd$ and $r>0$ let ${\rm conv}_{r, \K}(X):=\Rd$. Furthermore, for simplicity let ${\rm conv}_{r, \K}(\emptyset):=\emptyset$. Finally, we say that $X\subseteq\Rd$ is {\it $r$-ball convex} for $r>0$ in $\Md_{\K}$ if $X={\rm conv}_{r, \K}(X)$. Clearly, $X_{\K}^r$ is $r$-ball convex in $\Md_{\K}$ for any $X\subseteq\Rd$. 

\begin{lemma}\label{special}
Let $d>1$and $r>0$ be given and let $\Md_{\K}$ possess a generating unit ball. If $X_{\K}^r\neq\emptyset$, then
\begin{equation}\label{intersection-5}
X_{\K}^r-{\rm conv}_{r, \K}(X)=B_{\K}[\oo, r] .
\end{equation}
\end{lemma}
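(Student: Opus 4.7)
The strategy is to combine the generating-set hypothesis on $\K$ with standard Minkowski-algebra bookkeeping, interpreting the subtraction as $A - B := A + (-B)$.

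By hypothesis $X_{\K}^{r} = \bigcap_{\x \in X}(\x + r\K)$ is a nonempty intersection of translates of $r\K = B_{\K}[\oo, r]$. Since the generating property is preserved under positive dilation, $r\K$ is also a generating set, so there exists a nonempty compact convex set $Z \subset \Rd$ with
\begin{equation*}
X_{\K}^{r} + Z = r\K = B_{\K}[\oo, r].
\end{equation*}
By the cancellation law for Minkowski sums of compact convex bodies (equivalently, by uniqueness of support-function decompositions), this $Z$ is unique and equals the Minkowski erosion $Z = \{\z \in \Rd : \z + X_{\K}^{r} \subseteq r\K\}$.

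The key step will be to show that $-Z = {\rm conv}_{r, \K}(X)$, after which the lemma follows by substitution. Unwinding the description of $Z$ and using $\oo$-symmetry of $\K$ (so that $r\K = -r\K$),
\begin{equation*}
-Z = \{\y \in \Rd : -\y + X_{\K}^{r} \subseteq r\K\} = \{\y : X_{\K}^{r} \subseteq \y + r\K\} = \{\y : X_{\K}^{r} \subseteq B_{\K}[\y, r]\}.
\end{equation*}
The same symmetry gives the basic equivalence $\p \in B_{\K}[\y, r] \Leftrightarrow \y \in B_{\K}[\p, r]$, so $X_{\K}^{r} \subseteq B_{\K}[\y, r]$ becomes $\y \in \bigcap_{\p \in X_{\K}^{r}} B_{\K}[\p, r]$. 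Moreover the defining condition $\p \in X_{\K}^{r}$ rewrites (again by symmetry) as $X \subseteq B_{\K}[\p, r]$. Hence the family $\{B_{\K}[\p, r] : \p \in X_{\K}^{r}\}$ is exactly the family of balls of radius $r$ containing $X$, whose intersection is by definition ${\rm conv}_{r, \K}(X)$. Thus $-Z = {\rm conv}_{r, \K}(X)$, and
\begin{equation*}
B_{\K}[\oo, r] = X_{\K}^{r} + Z = X_{\K}^{r} - {\rm conv}_{r, \K}(X).
\end{equation*}

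The main obstacle is the passage from the raw consequence of the generating property (the mere existence of some summand $Z$) to the explicit identification of that summand with $-{\rm conv}_{r, \K}(X)$. This identification hinges on the uniqueness of complementary Minkowski summands of a convex body (a support-function argument) together with a careful symmetry-driven reinterpretation of the two membership conditions that define $X_{\K}^{r}$ and ${\rm conv}_{r, \K}(X)$; the rest is a matter of routine set-theoretic manipulation.
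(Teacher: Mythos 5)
Your proof is correct and follows essentially the same route as the paper's: both use the generating property to realize $X_{\K}^r$ as a summand of $r\K$, identify the complementary summand with the erosion $r\K\sim X_{\K}^r$ (the paper via Schneider's Lemma 3.1.8, you via the cancellation law, which is the same content), and then compute that erosion to be $-{\rm conv}_{r,\K}(X)$. Your write-up is in fact slightly more explicit than the paper's in justifying why $\bigcap_{\p\in X_{\K}^r}B_{\K}[\p,r]={\rm conv}_{r,\K}(X)$.
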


\begin{proof}
Clearly, as $B_{\K}[\oo, 1]=\K$ is a generating set in $\Rd$ therefore the closed ball having radius $r>0$ in $\Md_{\K}$, i.e., $B_{\K}[\oo , r]=r\K$ is also a generating set in $\Rd$. In particular, $X_{\K}^r\neq\emptyset$ is a summand of $B_{\K}[\oo , r]$. Now, recall Lemma 3.1.8 of \cite{Sc} stating that the compact convex set $\emptyset\neq{A'}\subset\Rd$ is a summand of the compact convex set  $\emptyset\neq{A}\subset\Rd$ if and only if $(A\sim A')+A'=A$, where $A\sim A':=\cap_{\mathbf{a}'\in A'}(A-\mathbf{a}')$. This implies that
$(B_{\K}[\oo , r]\sim X_{\K}^r)+X_{\K}^r=B_{\K}[\oo , r]$. Finally, we are left to observe that $B_{\K}[\oo , r]\sim X_{\K}^r=\cap_{\x\in X_{\K}^r}(B_{\K}[\oo , r]-\x)=
-\cap_{\x\in X_{\K}^r}B_{\K}[\x , r]=-{\rm conv}_{r,\K}(X)$, finishing the proof of Lemma~\ref{special}.
\end{proof}

\begin{remark}\label{open problem on special norm}
It seems to be an open problem to characterize those Minkowski spaces $\Md_{\K}$ for which (\ref{intersection-5}) holds. Nevertheless Theorem 8 of \cite{MRS} states that if (\ref{intersection-5}) holds in $\Md_{\K}$, then $\| \cdot \|_{\K}$ is a perfect norm (that is, every complete set is of constant width). For conditions equivalent to (\ref{intersection-5}) see Theorem 6 in \cite{MRS}.
\end{remark}

Clearly, the Brunn--Minkowski inequality (\cite{BuZa}, \cite{Sc}) combined with Lemma~\ref{special} yields

\begin{corollary}\label{special-corollary}
Let $d>1$and $r>0$ be given and let $\Md_{\K}$ possess a generating unit ball. If $X_{\K}^r\neq\emptyset$, then
\begin{equation}\label{intersection-6}
V_d(X_{\K}^r)^{\frac{1}{d}}+V_d({\rm conv}_{r,\K}(X))^{\frac{1}{d}}\leq r V_d(\K)^{\frac{1}{d}} . 
\end{equation}
\end{corollary}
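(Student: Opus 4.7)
The plan is to exploit the Minkowski sum identity provided by Lemma~\ref{special} and feed it directly into the classical Brunn--Minkowski inequality. Since Lemma~\ref{special} rewrites $B_{\K}[\oo,r]$ as a Minkowski difference, the corollary should follow in essentially one line; there is no genuine obstacle here, only some bookkeeping about reflections and scaling.

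First, I would rearrange the identity in Lemma~\ref{special}. Writing $A := X_{\K}^r$ (which is assumed nonempty) and $C := {\rm conv}_{r,\K}(X)$, Lemma~\ref{special} gives $A - C = B_{\K}[\oo,r]$, i.e.\ the Minkowski sum $A + (-C)$ equals the ball $r\K$. Note that $C$ is a nonempty compact convex set (as a nonempty intersection of closed balls, since $A\neq\emptyset$ forces the family of circumscribing balls to be nonempty), and $A$ is also a nonempty compact convex set (being an $r$-ball body). So Brunn--Minkowski applies to both summands.

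Next, I would apply the Brunn--Minkowski inequality to $A+(-C)$:
\begin{equation*}
V_d\bigl(A+(-C)\bigr)^{\frac{1}{d}} \geq V_d(A)^{\frac{1}{d}} + V_d(-C)^{\frac{1}{d}}.
\end{equation*}
Using $-C$ has the same volume as $C$ (Lebesgue measure is invariant under $\x\mapsto-\x$), $V_d(-C)=V_d(C)$. On the left-hand side, the identity from Lemma~\ref{special} gives $V_d(A+(-C)) = V_d(r\K) = r^d V_d(\K)$, so $V_d(A+(-C))^{1/d} = r V_d(\K)^{1/d}$.

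Substituting these back and replacing $A$ and $C$ by their original names yields
\begin{equation*}
V_d(X_{\K}^r)^{\frac{1}{d}} + V_d\bigl({\rm conv}_{r,\K}(X)\bigr)^{\frac{1}{d}} \leq r\, V_d(\K)^{\frac{1}{d}},
\end{equation*}
which is precisely \eqref{intersection-6}. The only conceptual ingredient beyond Lemma~\ref{special} is Brunn--Minkowski, which is used as a black box, so I do not anticipate any real difficulty in executing this plan.
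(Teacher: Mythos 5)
Your proof is correct and is exactly the paper's argument: the paper derives the corollary by combining Lemma~\ref{special} with the Brunn--Minkowski inequality applied to the Minkowski sum $X_{\K}^r+\left(-{\rm conv}_{r,\K}(X)\right)=B_{\K}[\oo,r]$, just as you do. Your added remarks on the nonemptiness of ${\rm conv}_{r,\K}(X)$ and the reflection-invariance of Lebesgue measure are the right bookkeeping and introduce no gap.
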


Next, observe that based on (\ref{intersection-2}) we have $\emptyset\neq P_{\K}^r=\left(P_{\frac{\lambda}{2}}^{\K}\right)_{\K}^{r+\frac{\lambda}{2}}$ and so, Corollary~\ref{special-corollary} yields
$$V_d(P_{\K}^r)=V_d\left(\left(P_{\frac{\lambda}{2}}^{\K}\right)_{\K}^{r+\frac{\lambda}{2}}\right)\leq$$
\begin{equation}\label{intersection-7}
\left[\left(r+\frac{\lambda}{2}\right)V_d(\K)^{\frac{1}{d}}-V_d\left({\rm conv}_{r+\frac{\lambda}{2}, \K}\left(P_{\frac{\lambda}{2}}^{\K}\right)\right)^{\frac{1}{d}}\right]^d\leq
\left(r-(N^{\frac{1}{d}}-1)\frac{\lambda}{2} \right)^dV_d(\K),
\end{equation}
where in the last inequality we have used the fact that $\{B_{\K}\left[\p_i, \frac{\lambda}{2}\right]\ |\ 1\leq i\leq N\}$ is a packing in $\Rd$ and therefore 
$V_d\left({\rm conv}_{r+\frac{\lambda}{2}, \K}\left(P_{\frac{\lambda}{2}}^{\K}\right)\right)\geq N\left(\frac{\lambda}{2}\right)^dV_d(\K)$. Finally, observe that $N\geq 3^d$ implies
$\left(r-(N^{\frac{1}{d}}-1)\frac{\lambda}{2} \right)^dV_d(\K)\leq \left(r-\lambda\right)^dV_d(\K)<\left(r-\frac{d}{d+1}\lambda\right)^dV_d(\K)$. This inequality combined with (\ref{intersection-3}) and (\ref{intersection-7}) completes the proof of Theorem~\ref{intersection}.

\section{Proof of Remark~\ref{intersection-remark}}

Assume that the balls of $\Md_{\K}$ are generating sets in $\Rd$ and $\emptyset\neq A\subset\Rd$ is a compact set with $V_d(A)>0$. If $A_{\K}^r\neq\emptyset$, then Corollary~\ref{special-corollary} implies in a straightforward way that
\begin{equation}\label{intersection-remark-1}
V_d(A_{\K}^r)\leq\left[ r V_d(\K)^{\frac{1}{d}}-V_d({\rm conv}_{r,\K}(A))^{\frac{1}{d}} \right]^d\leq \left[ r V_d(\K)^{\frac{1}{d}}-V_d(A)^{\frac{1}{d}} \right]^d=V_d(B_{\K}[\oo, r-r_{\K}(A)]) .
\end{equation}
Thus, indeed (\ref{Blaschke-Santalo}) holds, that is, the balls of $\Md_{\K}$ are volumetric maximizers for $r$-ball bodies in $\Md_{\K}$.

Finally, assume that the balls of $\Md_{\K}$ are volumetric maximizers for $r$-ball bodies in $\Md_{\K}$. Moreover, assume that $Q:=\{\q_1, \dots , \q_N\}\subset\Rd$ is a uniform contraction of $P:=\{\p_1, \dots ,\p_N\}\subset\Rd$ with separating value $\lambda$ in $\Md_{\K}$ and $r>0, \lambda>0, d>1$, $N\geq 3^d$. We follow closely the proof of Theorem~\ref{intersection}. Thus, we clearly have (\ref{intersection-2}) and (\ref{intersection-3}). Next, observe that based on (\ref{intersection-2}) we have $\emptyset\neq P_{\K}^r=\left(P_{\frac{\lambda}{2}}^{\K}\right)_{\K}^{r+\frac{\lambda}{2}}$. As $\{B_{\K}\left[\p_i, \frac{\lambda}{2}\right]\ |\ 1\leq i\leq N\}$ is a packing in $\Rd$ therefore $r_{\K}\left(P_{\frac{\lambda}{2}}^{\K}\right)=N^{\frac{1}{d}}\frac{\lambda}{2}$ and so, (\ref{Blaschke-Santalo}) yields
\begin{equation}\label{intersection-remark-2}
V_d\left(\left(P_{\frac{\lambda}{2}}^{\K}\right)_{\K}^{r+\frac{\lambda}{2}}\right)\leq V_d\left(B_{\K}\left[\oo, \left(r+\frac{\lambda}{2}\right) -N^{\frac{1}{d}}\frac{\lambda}{2}\right]\right)=\left(r-(N^{\frac{1}{d}}-1)\frac{\lambda}{2} \right)^dV_d(\K) .
\end{equation}
Hence, (\ref{intersection-3}) and (\ref{intersection-remark-2}) imply (\ref{main-2}) in a straightforward way. 

\section{Proof of Theorem~\ref{Euclidean}}

\subsection{Proof of Part (i)}

We follow the above proofs of Theorem~\ref{union} and Remark~\ref{union-conv}. Clearly,
\begin{equation} \label{union-222}
{\rm diam}\left({\rm conv}(Q_r)\right)\leq \lambda+2r ,
\end{equation}
where ${\rm diam}\left(\cdot\right):={\rm diam}_{\B^d[\oo,1]}\left(\cdot\right)$. Now, recall that among all convex bodies of given diameter in $ \Ee^d$ precisely the balls have the greatest $k$-th quermassintegral for $0\leq k<d-1$ (\cite{Sc}, p. 335), that is, for any compact set $\emptyset\neq A\subset \Ee^d$ and $0\leq k<d$ we have 
\begin{equation}\label{union-2222}
W_k\left({\rm conv}(A)\right)\leq W_k\left(\B^d\left[\oo,\frac{{\rm diam}(A)}{2}\right]\right)=\left(\frac{{\rm diam}(A)}{2}\right)^{d-k}W_k\left(\B^d[\oo,1]\right)=\left(\frac{{\rm diam}(A)}{2}\right)^{d-k}\omega_d .
\end{equation}
Hence, (\ref{union-222}) and (\ref{union-2222}) yield that one can replace (\ref{union-22}) by the following inequality for $0\leq k<d$:
\begin{equation}\label{union-22222}
W_k\left({\rm conv}(Q_r)\right)\leq \left(r+\frac{\lambda}{2}\right)^{d-k}\omega_d .
\end{equation}
Next recall that among convex bodies of given (positive) volume in $ \Ee^d$ precisely the balls have the smallest $k$-th quermassintegral for any $0<k<d$ (\cite{Sc}, p. 335). This statement combined with (\ref{union-44}) (which has been derived under the assumption $0<\frac{\lambda}{2}\leq r$) implies the following inequality for $0\leq k<d$:
\begin{equation}\label{union-222222}
W_k\left({\rm conv}(P_r)\right)\geq   W_k\left(\B^d\left[\oo,r+(N^{\frac{1}{d}}-1)\frac{\lambda}{2}\right]\right)=\left(r+(N^{\frac{1}{d}}-1)\frac{\lambda}{2} \right)^{d-k}\omega_d .
\end{equation}    
Finally, $N\geq 2^d$, (\ref{union-22222}) and (\ref{union-222222}) complete the proof of (\ref{main-22-conv-Euclidean}). 

So, we are left to prove (\ref{main-22-Euclidean}). The proof that follows is an extension of the proof of (\ref{main-22-conv-Euclidean}). First, recall that Ohmann \cite{Oh54} proved the inequality  ${\omega_d}^{d-1-k}W_k(A)\leq \left({W_{d-1}(A)}\right)^{d-k}$ for any compact set $\emptyset\neq A\subset \Ee^d$ and $0\leq k<d$ with equality for balls. This result applied to $A=Q_r$ and ${\rm diam}(Q_r)\leq \lambda+2r $ yield that
\begin{equation}\label{extra-1}
W_k(Q_r)\leq\frac{1}{{\omega_d}^{d-1-k}}\left({W_{d-1}\left(\B^d\left[\oo,r+\frac{\lambda}{2}\right]\right)}\right)^{d-k}=\left(r+\frac{\lambda}{2}\right)^{d-k}\omega_d
\end{equation}
holds for $0\leq k<d$. Second, according to another result of Ohmann \cite{Oh52} the inequality $W_k(A)\geq W_k(\B^d[\oo,r(A)])$ holds for any compact set $\emptyset\neq A\subset \Ee^d$ and $0\leq k<d$, where the volumetric radius $r(A)$ of $A$ is defined by $V_d(\B^d[\oo,r(A)]):=V_d(A)$. If we apply this inequality to $A=Q_r$ and combine it with (\ref{union-4}), then we get that
\begin{equation}\label{extra-2}
W_k\left(P_r\right)\geq   W_k\left(\B^d\left[\oo,r+(N^{\frac{1}{d}}-1)\frac{\lambda}{2}\right]\right)=\left(r+(N^{\frac{1}{d}}-1)\frac{\lambda}{2} \right)^{d-k}\omega_d 
\end{equation}
holds for $0\leq k<d$. Thus, $N\geq 2^d$, (\ref{extra-1}), and (\ref{extra-2}) finish the proof of (\ref{main-22-Euclidean}).

\subsection{Proof of Part (ii)}

Recall that  $P:=\{\p_1,\dots ,\p_N\}\subset\Ee^d$ such that $ 0<\lambda\leq |\p_i-\p_j|\ {\rm holds\ for\ all} \ 1\leq i<j\leq N$, where $N\geq {2.359}^d$ with $d$ being sufficiently large.
We denote the circumradius of a set $X\subseteq\Ee^d$, $d>1$ by ${\rm cr} (X)$, which is defined by ${\rm cr}(X):=\inf\{r\ |\ X\subseteq \B^d[\x, r]\ {\rm for\ some}\ \x\in\Ee^d\}$.

\begin{lemma}\label{Bezdek-Kabatiansky-Levenshtein}
$\sqrt{\frac{2d}{d+1}}\left(\frac{\lambda}{2}\right)<0.7865\cdot\lambda<{\rm cr} (P)$, where $d\geq d_0$ with a large universal constant $d_0>0$ and ${\rm card}(P)=N\geq 2.359^d$.
\end{lemma}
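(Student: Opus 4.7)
For the first (purely numerical) inequality, observe that $2d/(d+1) < 2$ for every $d \geq 1$, hence $\sqrt{2d/(d+1)}\cdot(\lambda/2) < \lambda/\sqrt{2} \approx 0.7071\lambda < 0.7865\lambda$, with no dimension assumption at all. This inequality is immediate and needs no further argument; the largeness hypothesis $d\geq d_0$ is used only for the second inequality.

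For the main (second) inequality I plan to argue by contradiction using the Kabatiansky--Levenshtein upper bound on spherical codes. Assume ${\rm cr}(P) \leq R := 0.7865\lambda$. After translating, $P \subseteq \B^d[\oo, R]$; rescaling by $1/R$ yields $P/R \subseteq \B^d[\oo, 1]$ with all pairwise Euclidean distances at least $\lambda/R = 1/0.7865 \approx 1.2715$. Next, I lift the rescaled set to the unit sphere $S^d \subset \Ee^{d+1}$ via the hemispherical map $\tilde{\p} \mapsto \left(\tilde{\p},\, \sqrt{1 - |\tilde{\p}|^2}\right)$. Because the added vertical coordinate is nonnegative, this lift cannot decrease Euclidean distances, so the image is a spherical code on $S^d$ of minimum chord distance at least $1.2715$. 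Using chord $=2\sin(\theta/2)$ on $S^d$, the corresponding minimum angular separation satisfies $\theta \geq 2\arcsin(0.6357) > 78.9^{\circ}$.

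Now the Kabatiansky--Levenshtein asymptotic bound asserts that any spherical code on $S^d$ of minimum angular separation $\theta$ has cardinality at most $2^{(d+1)(R_{\rm KL}(\theta)+o(1))}$ as $d\to\infty$, where the rate function $R_{\rm KL}$ is monotone decreasing in $\theta$ and is explicitly known to satisfy, for instance, $R_{\rm KL}(60^{\circ}) \approx 0.402$. In particular, $R_{\rm KL}(78.9^{\circ}) \leq R_{\rm KL}(60^{\circ}) < 0.5 \ll \log_2 2.359 \approx 1.238$, so our lifted code has size at most $2^{(d+1)\cdot 0.402(1+o(1))}$, which is strictly smaller than $2.359^d$ for every $d$ beyond a suitable universal constant $d_0$. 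This contradicts $N \geq 2.359^d$ and proves $0.7865\lambda < {\rm cr}(P)$.

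The main obstacle I foresee is fixing a precise and quotable form of the Kabatiansky--Levenshtein bound --- the literature offers several essentially equivalent variants --- and absorbing the $(1+o(1))$ asymptotic factor cleanly into the universal constant $d_0$. The geometric steps (translation, rescaling, and the standard hemispherical lift that converts a packing in a Euclidean ball into a spherical code of comparable minimum distance) are entirely routine. The considerable slack between $R_{\rm KL}(78.9^{\circ})$ and $\log_2 2.359$ suggests that the numerical constants $(2.359, 0.7865)$ are chosen conservatively, leaving plenty of room for the asymptotic bookkeeping.
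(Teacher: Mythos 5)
Your argument is correct, but it takes a genuinely different route from the paper's. The paper never lifts to a sphere: it observes that $\{\B^d[\p_i,\frac{\lambda}{2}]\}_{i=1}^N$ is a packing, invokes a lemma from \cite{Be02} asserting that the density of this packing inside the union $\bigcup_{i=1}^N\B^d[\p_i,\lambda]$ of the doubled balls is less than the optimal packing density $\delta_d$ of congruent balls in $\Ee^d$, bounds that union by a ball of radius ${\rm cr}(P)+\lambda$, and then quotes the Kabatiansky--Levenshtein density bound $\delta_d<2^{-0.599d}$ from \cite{CoZh}; solving $2.359^d(\lambda/2)^d/({\rm cr}(P)+\lambda)^d<2^{-0.599d}$ gives ${\rm cr}(P)>\left(2.359\cdot 2^{-0.401}-1\right)\lambda\approx 0.7869\lambda$, which only barely clears $0.7865\lambda$. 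You instead normalize $P$ into a unit ball and apply the Kabatiansky--Levenshtein spherical-code bound directly after the hemispherical lift --- in effect re-deriving, for this particular configuration, the mechanism by which the $2^{-0.599d}$ density bound is itself obtained. Because your normalized minimum distance $\approx 1.27$ far exceeds the distance that governs the packing-density formulation, your exponent $R_{\rm KL}(78.9^{\circ})\approx 0.08$ sits far below the required $\log_2 2.359\approx 1.24$, so your margin is enormous where the paper's is razor-thin; pushed to its limit, your argument would yield a strictly stronger lower bound on ${\rm cr}(P)$ and could in principle improve the constant $2.359$ in part (ii) of Theorem 3. The trade-off is that the paper's proof is shorter because it cites two packaged results, while yours must fix a precise quotable form of the spherical-code bound and absorb the $o(1)$ term into $d_0$, which, as you note, the slack easily permits. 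The first inequality is handled identically (and trivially) in both.
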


\begin{proof}
First, we note that $\B^d[\p_1,\frac{\lambda}{2}],\dots ,\B^d[\p_N,\frac{\lambda}{2}]$ are pairwise non-overlapping in $\Ee^d$. Thus, the Lemma of \cite{Be02} and $N\geq 2.359^d$ imply that
\begin{equation}\label{Bezdek-1}
\frac{ {2.359}^d\left(\frac{\lambda}{2}\right)^d}{({\rm cr}(P)+\lambda)^d} \leq\frac{N\left(\frac{\lambda}{2}\right)^d}{({\rm cr}(P)+\lambda)^d}<\frac{V_d\left(\cup_{i=1}^N \B^d[\p_i,\frac{\lambda}{2}]\right)}{V_d\left(\cup_{i=1}^N \B^d[\p_i,\lambda] \right)}\leq \delta_d,
\end{equation}
where $\delta_d$ stands for the largest density of packings of congruent balls in $\Ee^d$. Second, recall that Kabatiansky and Levenshtein (\cite{CoZh}) have shown that 
\begin{equation}\label{Kab-Lev}
\delta_d<2^{-0.599d}
\end{equation}
holds for sufficiently large $d$ say, for $d\geq d_0$, where $d_0>0$ is a large universal constant. Hence, the statement follows from (\ref{Bezdek-1}) and (\ref{Kab-Lev}) in a straightforward way.
\end{proof}


If $r\leq {\rm cr(P})$, then $V_d(P^r)=V_d(\emptyset)=0$ and so, $V_d(P^r)\leq V_d(Q^r)$, i.e., (\ref{main-2-Euclidean}) follows. Thus, for the rest of the proof we assume that
${\rm cr}(P)< r$, which together with Lemma~\ref{Bezdek-Kabatiansky-Levenshtein} implies
\begin{equation}\label{assumption}
\sqrt{\frac{2d}{d+1}}\left(\frac{\lambda}{2}\right)<0.7865\cdot\lambda<{\rm cr}(P)< r
\end{equation}
with $d\geq d_0$ and ${\rm card}(P)=N\geq 2.359^d$. Next, as Euclidean balls are generating sets therefore (\ref{intersection-7}) implies the following statement. (See also Lemma 2.6 of \cite{BeNa} and (18) in \cite{Be19}.) 

\begin{lemma}\label{Bezdek-Naszodi}
If $d>1$, $\lambda>0$, $r>0$, and ${\rm card}(P)=N>1$, then
$V_{d}\left(P^{r}\right)\leq V_{d}\left(\B^d\left[\mathbf{o},r-\left(N^{\frac{1}{d}}-1\right)\left(\frac{\lambda}{2}\right)\right]\right)$.
\end{lemma}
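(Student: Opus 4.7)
The plan is to recognize Lemma~\ref{Bezdek-Naszodi} as the Euclidean specialization of the estimate (\ref{intersection-7}) derived in the proof of Theorem~\ref{intersection}. Since Euclidean balls are generating sets in $\Rd$, all the machinery used there, in particular Corollary~\ref{special-corollary}, applies directly with $\K=\B^d[\oo,1]$, and the bound should follow by essentially the same argument.

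First, I would dispose of the trivial case $r\leq {\rm cr}(P)$: here $P^r=\emptyset$, so $V_d(P^r)=0$ and the claimed inequality holds (with the convention that a ball of nonpositive radius has zero volume). From now on assume $r>{\rm cr}(P)$, so that $P^r\neq\emptyset$.

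Next, I would use the Euclidean identity $P^r=(P_{\lambda/2})^{r+\lambda/2}$, which follows from the fact that the farthest point of $\B^d[\p_i,\lambda/2]$ from any $\y$ sits at Euclidean distance $|\y-\p_i|+\lambda/2$. Applying Corollary~\ref{special-corollary} with $X=P_{\lambda/2}$ and radius $r+\lambda/2$ then yields
\[
V_d(P^r)^{1/d}+V_d\bigl({\rm conv}_{r+\lambda/2,\B^d[\oo,1]}(P_{\lambda/2})\bigr)^{1/d}\leq \left(r+\tfrac{\lambda}{2}\right)\omega_d^{1/d}.
\]
Since $|\p_i-\p_j|\geq\lambda$, the balls $\B^d[\p_i,\lambda/2]$ form a packing, so $V_d(P_{\lambda/2})=N(\lambda/2)^d\omega_d$, and because the $r$-ball convex hull contains the original set, the volume term on the left is bounded below by $N^{1/d}(\lambda/2)\omega_d^{1/d}$. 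Rearranging then gives
\[
V_d(P^r)^{1/d}\leq \left(r-(N^{1/d}-1)\tfrac{\lambda}{2}\right)\omega_d^{1/d},
\]
and raising to the $d$-th power (or, if the right-hand side is negative, observing that the left-hand side is then forced to be zero) delivers the claimed bound.

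Because the lemma is essentially a reformulation of (\ref{intersection-7}) for $\K=\B^d[\oo,1]$ and has already appeared as Lemma 2.6 of \cite{BeNa} and equation (18) in \cite{Be19}, I do not expect a real obstacle. The only minor subtlety is the edge case $r<(N^{1/d}-1)\lambda/2$, which is handled by interpreting the right-hand side as the volume of an empty set.
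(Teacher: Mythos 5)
Your proposal is correct and follows exactly the route the paper takes: the paper proves this lemma simply by observing that Euclidean balls are generating sets and invoking (\ref{intersection-7}), whose derivation via $P^r=(P_{\lambda/2})^{r+\lambda/2}$, Corollary~\ref{special-corollary}, and the packing lower bound on the $r$-ball convex hull you have reproduced faithfully. The only negligible slip is that at $r={\rm cr}(P)$ the set $P^r$ is a nonempty set of measure zero rather than empty, which does not affect the argument.
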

\noindent Here we follow the convention that if $r-\left(N^{\frac{1}{d}}-1\right)\left(\frac{\lambda}{2}\right)<0$, then $\B^d\left[\mathbf{o},r-\left(N^{\frac{1}{d}}-1\right)\left(\frac{\lambda}{2}\right)\right]=\emptyset$ with  $V_{d}(\emptyset)=0$.

The statement that follows is a strengthening of (\ref{intersection-3}) as well as of Lemma 2.2 in \cite{BeNa}, i.e., of (13) in \cite{Be19} and it can be derived from a volumetric inequality of Schramm \cite{Sc88} in a rather straightforward way. For the sake of completeness, recall that  $Q:=\{\q_1,\dots ,\q_N\}\subset\Ee^d$ such that $ |\q_i-\q_j|\leq \lambda \ {\rm holds\ for\ all} \ 1\leq i<j\leq N$, where $N\geq {2.359}^d$ with $d$ being sufficiently large.

\begin{lemma}\label{Schramm-applied}
$V_{d}\left(Q^{r}\right)\geq    V_{d}\left(\B^d\left[\mathbf{o}, \sqrt{ r^2-\frac{d-1}{d+1}\left(\frac{\lambda}{2}\right)^2}-\left(\frac{\lambda}{2}\right) \right]\right) $, where $d\geq d_0$ and $N\geq 2.359^d$.
 \end{lemma}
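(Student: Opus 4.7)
The plan is to invoke the volumetric inequality due to Schramm \cite{Sc88}. That inequality (in the form relevant here) asserts that for any bounded nonempty set $X \subset \Ee^d$ with ${\rm diam}(X) \leq D$ and any $r$ satisfying $r > \sqrt{\frac{2d}{d+1}}(D/2)$, one has
$$V_d\left(\bigcap_{\x \in X} \B^d[\x, r]\right) \geq V_d\left(\B^d\left[\oo, \sqrt{r^2 - \tfrac{d-1}{d+1}(D/2)^2} - D/2\right]\right).$$

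To apply this to our $Q$, I would first note that ${\rm diam}(Q) \leq \lambda$ by the uniform contraction assumption. Second, the inequality (\ref{assumption}) from the preceding Lemma~\ref{Bezdek-Kabatiansky-Levenshtein} supplies the required condition $r > \sqrt{\frac{2d}{d+1}}(\lambda/2)$, and a short algebraic check confirms that under this condition $\sqrt{r^2 - \frac{d-1}{d+1}(\lambda/2)^2} - \lambda/2 > 0$, so that the right-hand ball in the claim has positive radius. Plugging $X = Q$ and $D = \lambda$ into Schramm's inequality then yields precisely the lower bound claimed in Lemma~\ref{Schramm-applied}.

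Geometrically, Schramm's bound strictly improves on the Jung circumscribed-ball argument already used to derive (\ref{intersection-3}). The elementary Jung bound places only a ball of radius $r - \sqrt{\frac{2d}{d+1}}(\lambda/2)$ inside $Q^r$, centered at the Jung center of $Q$. Schramm's refinement exploits the fact that, since ${\rm diam}(Q) \leq \lambda$, no two points of $Q$ can sit antipodally at the Jung circumradius; consequently, shifting the center of the inscribed ball off the Jung center produces a strictly larger ball. For $d > 1$ one indeed has $\sqrt{r^2 - \frac{d-1}{d+1}(\lambda/2)^2} - \lambda/2 > r - \sqrt{\frac{2d}{d+1}}(\lambda/2)$, confirming that Schramm's bound is a genuine sharpening.

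The only real obstacle is matching Schramm's inequality, originally phrased in the context of bodies of constant width, to our finite-set situation. This is routinely bridged by P\'al's completion theorem: any $Q \subset \Ee^d$ of diameter $\leq \lambda$ is contained in some body $\widehat Q$ of constant width $\lambda$; from $Q \subseteq \widehat Q$ we obtain $\widehat Q^r \subseteq Q^r$, so any volumetric lower bound Schramm supplies for $\widehat Q^r$ immediately transfers to $Q^r$.
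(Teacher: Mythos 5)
Your argument is correct and follows essentially the same route as the paper: both apply Schramm's volumetric inequality (Theorem 2 of \cite{Sc88}) with the circumradius of $Q$ controlled by Jung's theorem and the hypothesis $r>\sqrt{\tfrac{2d}{d+1}}\left(\tfrac{\lambda}{2}\right)$ supplied by (\ref{assumption}). The only difference is your final detour through P\'al's completion theorem, which is harmless but unnecessary, since Schramm's Theorem 2 is stated for an arbitrary set of given diameter and circumradius, not only for bodies of constant width.
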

 
 \begin{proof}
 First, recall Theorem 2 of \cite{Sc88}.
 \begin{theorem}\label{Schramm}
 Let $K$ be a set of diameter $\sigma$ and circumradius $\rho$ in $\Ee^d$. If $\mu>\rho>0$, then
 \begin{equation}\label{Schramm-general}
 V_d(K^{\mu})\geq F\left(\mu, \rho, \frac{\sigma}{2}\right)^d\omega_d,
 \end{equation}
 where $F(\mu,\rho, x):=\sqrt{\mu^2-\rho^2+x^2}-x$, which is a positive, decreasing, and convex function of $x>0$.
  \end{theorem}
 Second, Jung's theorem (\cite{Ju}) implies that ${\rm cr} (Q)\leq\sqrt{\frac{2d}{d+1}}\left(\frac{\lambda}{2}\right)$ and (\ref{assumption}) guarantees that $\sqrt{\frac{2d}{d+1}}\left(\frac{\lambda}{2}\right)<r$. Hence, from this and (\ref{Schramm-general}), using the monotonicity of $F(\mu,\rho, x)$ in $x$ (resp., $\rho$), one obtains

\begin{equation}\label{Jung-applied}
V_{d}\left(\B^d\left[\mathbf{o}, \sqrt{ r^2-\frac{d-1}{d+1}\left(\frac{\lambda}{2}\right)^2}-\left(\frac{\lambda}{2}\right) \right]\right) =
V_d\left(\B^d\left[\mathbf{o}, F\left( r,\sqrt{\frac{2d}{d+1}}\left(\frac{\lambda}{2}\right),\left(\frac{\lambda}{2}\right)\right)\right]\right) \leq V_d\left(Q^r\right),
\end{equation}
which completes the proof of Lemma~\ref{Schramm-applied}.
 \end{proof}

Clearly, Lemma~\ref{Bezdek-Naszodi} and Lemma~\ref{Schramm-applied} imply that in order to show the inequality $V_d(P^r)\leq V_d(Q^r)$, it is sufficient to prove 
\begin{equation}\label{Bezdek-Naszodi-Schramm}
r-\left(N^{\frac{1}{d}}-1\right)\left(\frac{\lambda}{2}\right)\leq \sqrt{ r^2-\frac{d-1}{d+1}\left(\frac{\lambda}{2}\right)^2}-\left(\frac{\lambda}{2}\right).
\end{equation}
(\ref{Bezdek-Naszodi-Schramm}) is equivalent to
\begin{equation}\label{almost-final}
\left(\frac{2r}{\lambda}\right)-\sqrt{\left(\frac{2r}{\lambda}\right)^2-\frac{d-1}{d+1}}+2\leq N^{\frac{1}{d}}
\end{equation}
and obviously, (\ref{almost-final}) follows (via $d\geq d_0$ and $N\geq 2.359^d$) from
\begin{equation}\label{final}
\left(\frac{2r}{\lambda}\right)-\sqrt{\left(\frac{2r}{\lambda}\right)^2-1}+2\leq 2.359\ .
\end{equation}
Finally, as $f(x):=x-\sqrt{x^2-1}$ is a positive and decreasing function for $x>1$ and as (\ref{assumption}) guarantees that $1.573<\frac{2r}{\lambda}$ therefore (\ref{final})
follows from $1.573-\sqrt{1.573^2-1}+2=2.3587...< 2.359$. This completes the proof of Theorem~\ref{Euclidean}.


\small

\bigskip


\noindent K\'aroly Bezdek \\
\small{Department of Mathematics and Statistics, University of Calgary, Canada}\\
\small{Department of Mathematics, University of Pannonia, Veszpr\'em, Hungary\\
\small{E-mail: \texttt{bezdek@math.ucalgary.ca}}

\end{document}